\documentclass[10pt]{amsart}
\usepackage{amsmath}
\usepackage{amssymb}
\usepackage{amsfonts}
\usepackage{pxfonts}
\usepackage[OT2,T1]{fontenc}
\usepackage[
textwidth=16.0cm, 
textheight=22.0cm,
hmarginratio=1:1,
vmarginratio=1:1]{geometry}
\usepackage{pifont}
\usepackage[english]{babel}
\usepackage{graphicx}
\usepackage{mathbbol}

% THEOREM Environments (Examples)-------------------------------------

\newtheorem{thm}{Theorem}[subsection]
\newtheorem{cor}[thm]{Corollary}

\theoremstyle{definition}

\theoremstyle{remark}
\newtheorem{rem}[thm]{Remark}

\numberwithin{equation}{subsection}
\numberwithin{figure}{subsection}

% NEW COMMANDS--------------------------------------------------------

\newcommand{\diff}{\mathrm{d}}
\newcommand{\C}{{\mathbb C}}
\newcommand{\R}{{\mathbb R}}

\newcommand{\Hyp}{{\mathbb H}}

\newcommand{\Z}{{\mathbb Z}}

\newcommand{\imag}{\mathrm{i}}
\newcommand{\e}{\mathrm{e}}

\newcommand{\ordo}{\mathrm{o}}

\newcommand{\vol}{\mathrm{vol}}

\DeclareMathOperator{\im}{Im}

\begin{document}

%---------------------------------------------------------------------
%Insert here the title, affiliations and abstract:
%
\title{Fourier uniqueness in $\R^4$}

\author{Andrew Bakan}
\address{
Bakan: Institute of Mathemtics\\
National Academy of Sciences of Ukraine
\\
Kiev 01601\\
Ukraine}
\email{andrew.g.bakan@gmail.com}

\author{Haakan Hedenmalm}
\address{
Hedenmalm: Department of Mathematics\\
KTH Royal Institute of Technology\\
S--10044 Stockholm\\
Sweden}
\email{haakanh@kth.se}

\author{Alfonso Montes-Rodr\'\i{}guez}
\address{
Montes-Rodr\'\i{}guez: Department of Mathematical Analysis\\
University of Sevilla\\
Seville\\
Spain}
\email{amontes@us.es}

\author{Danylo Radchenko}
\address{Radchenko:
Department of Mathematics\\
ETHZ\\
R\"amistrasse 101\\
CH-8092 Z\"urich\\
Switzerland}
\email{danradchenko@gmail.com}

\author{Maryna Viazovska}
\address{
Viazovska: Institute of Mathematics\\
EPFL\\
CH-1015 Lausanne\\
Switzerland}
\email{viazovska@gmail.com}

\subjclass[2000]{Primary 42B10, 37A45, 35L10}
\keywords{Fourier uniqueness, Heisenberg uniqueness, Klein-Gordon equation}
 
\thanks{This research was supported by Vetenskapsr\aa{}det (VR)}
 
\begin{abstract} 
We show an interrelation between the uniqueness aspect of the recent
Fourier interpolation formula of Radchenko and Viazovska and the Heisenberg
uniqueness for the Klein-Gordon equation and the lattice-cross of critical
density,
studied by Hedenmalm and Montes-Rodr\'\i{}guez. This has been known since 2017.
\end{abstract}

\maketitle

\section{Introduction} 

\subsection{Basic notation in the plane}
\label{subsec-1.1}
We write $\Z$ for the integers, $\Z_+$ for the positive integers, $\R$ 
for the real line, and $\C$ for the complex plane. We write $\Hyp$
for the upper half-plane $\{\tau\in\C:\,\im\tau>0\}$. Moreover, we let
$\langle\cdot,\cdot\rangle_d$ denote the Euclidean inner product of $\R^d$.

\subsection{The Fourier transform of radial functions}

For a function $f\in L^1(\R^d)$, we consider its Fourier transform (with
$x=(x_1,\ldots,x_d)$ and $y=(y_1,\ldots,y_d)$)
\[
\hat f(y):=\int_{\R^d}\e^{-\imag2\pi\langle x,y\rangle_d}f(x)\diff
\vol_d(x),
\quad \diff\vol_d(x):=\diff x_1\cdots\diff x_d.
\]
If $f$ is radial, then $\hat f$ is radial too. A particular example of
a radial function is the Gaussian
\begin{equation}
G_\tau(x):=\e^{\imag \pi\tau|x|^2},
\label{eq:Gauss}
\end{equation}
which decays nicely provided that $\im \tau>0$, that is, when $\tau\in\Hyp$.
The Fourier transform of a Gaussian is another Gaussian, in this case
\begin{equation}
\hat G_\tau(y):=\bigg(\frac{\tau}{\imag}\bigg)^{-d/2}\e^{-\imag \pi|y|^2/\tau}
=\bigg(\frac{\tau}{\imag}\bigg)^{-d/2} G_{-1/\tau}(y),
\label{eq:GL}
\end{equation}
Here, it is important that $\tau\mapsto-1/\tau$ preserves hyperbolic space
$\Hyp$. 
In the sense of distribution theory, the above relationship extends to
boundary points $\tau\in\R$ as well. We now consider the relationship
\begin{equation}
\Phi(x):=\int_\R G_\tau(x)\phi(\tau)\diff \tau=
\int_\R\e^{\imag\pi\tau|x|^2}\phi(\tau)\diff \tau,\qquad x\in\R^d.
\label{eq:Phirel}
\end{equation}
In terms of the Fourier transform, the relationship reads
\[
\Phi(x)=\hat \phi_1\bigg(-\frac{|x|^2}{2}\bigg),
\]
where the subscript signifies that we are dealing with the Fourier transform
on $\R^1$. This tells us that $\Phi$ is radial, but pretty arbitrary, if, say,
$\phi\in L^1(\R)$.
%Since $\Phi$ only uses $\hat \phi_1$ on $\R_-$, we may
%as well assume that $\hat\phi_1|_{\R_+}=0$, that is, $\phi\in H^1_-(\R)$. 
In view of the functional identity \eqref{eq:Gauss},
the Fourier transform of the radial function $\Phi$ equals
\begin{equation}
\hat\Phi(y):=\int_\R \hat G_\tau(y)\phi(\tau)\diff \tau=
\int_\R\bigg(\frac{\tau}{\imag}\bigg)^{-d/2} G_{-1/\tau}(y)\phi(\tau)\diff \tau
=\int_\R\bigg(\frac{\tau}{\imag}\bigg)^{-d/2} \e^{-\imag\pi |y|^2/\tau}
\phi(\tau)\diff \tau.
\label{eq:Phirel2}
\end{equation}
We now rewrite the relationships \eqref{eq:Phirel} and
\eqref{eq:Phirel2} using integration by parts. If $\phi$ is a
tempered test function,
integration by parts applied to \eqref{eq:Phirel} gives that
\begin{equation}
\Phi(x)=\frac{\imag}{\pi |x|^2}\int_\R\e^{\imag\pi\tau |x|^2}\phi'(\tau)
\diff \tau,\qquad x\in\R^d\setminus\{0\}.
\label{eq:IBP1}
\end{equation}
A similar application of integration by parts to \eqref{eq:Phirel2} gives
that
\begin{equation}
\hat\Phi(y)=\frac{\imag}{\pi|y|^2}
\int_\R\bigg(\frac{\tau}{\imag}\bigg)^{(4-d)/2}\phi(\tau)
\partial_\tau\e^{-\imag\pi|y|^2/\tau}\diff \tau
=\frac{1}{\imag\pi|y|^2}\int_\R\partial_\tau
\bigg\{\bigg(\frac{\tau}{\imag}\bigg)^{(4-d)/2}\phi(\tau)\bigg\}
\e^{-\imag\pi|y|^2/\tau}\diff\tau,
\label{eq:IBP2}
\end{equation}
where $y\in\R^d\setminus\{0\}$,
and we need to be a little careful around $\tau=0$ unless $d\in\{0,2,4\}$.
We now \emph{restrict to $d:=4$}, so that \eqref{eq:IBP2} simplifies to
\begin{equation}
\hat\Phi(y)=
\frac{1}{\imag\pi|y|^2}\int_\R
\phi'(\tau)\e^{-\imag\pi|y|^2/\tau}\diff\tau,\qquad y\in\R^4\setminus\{0\}.
\label{eq:IBP3}
\end{equation}
As for the test function $\phi$, we could think of the relations
\eqref{eq:IBP1}
and \eqref{eq:IBP3} as the fundamental relationship in place of
\eqref{eq:Phirel}
and \eqref{eq:Phirel2}. This allows us to place conditions on the derivative
$\phi'$ in place of $\phi$. For our considerations, we need one more piece of
information:
\begin{equation}
\int_\R\phi'(\tau)\diff\tau=0,
\label{eq:IBP4}
\end{equation}
which is obvious for tempered test functions $\phi$.

\section{Main results}

\subsection{The setup}

We consider $\R^4$ only, and consider for $\psi\in L^1(\R)$ the
associated function
\begin{equation}
\Psi(x)=-\frac{1}{\imag\pi |x|^2}\int_\R\e^{\imag\pi\tau |x|^2}\psi(\tau)
\diff \tau,\qquad x\in\R^4\setminus\{0\}.
\label{eq:Psi1}
\end{equation}
This is the same as the relation \eqref{eq:IBP1} only $\psi$ replaces $\phi'$
while $\Psi$ replaces $\Phi$.
For real $\tau$, let $H_\tau$ denote the function
\begin{equation}
H_\tau(x):=\frac{\e^{\imag \pi|x|^2\tau}}{\imag\pi|x|^2},\qquad
x\in\R^4\setminus\{0\},
\label{eq:Hfun1}
\end{equation}
which is locally integrable and decays at infinity. As such, it is a
tempered distribution, and its Fourier transform equals
\begin{equation}
\hat H_\tau(y)=\frac{1-\e^{-\imag\pi|y|^2/\tau}}{\imag\pi|y|^2}
=\frac{1}{\imag\pi|y|^2}-H_{-1/\tau}(y).
\label{eq:Hfun2}  
\end{equation}
This is the integrated version of the Fourier transformation law for
Gaussians \eqref{eq:GL} in dimension $d=4$.
Indeed, if we differentiate with respect to $\tau$
in \eqref{eq:Hfun2}, we recover \eqref{eq:GL}. In other words,
differentiation with respect to $\tau$ gives us that
$\hat H_\tau+H_{-1/\tau}$ is independent of
$\tau$. By letting $\tau$ tend to $0$, the identification with the Newton
kernel as in \eqref{eq:Hfun2} follows from the Riemann-Lebesgue lemma. 
In view of \eqref{eq:Hfun2}, the Fourier transform of the function $\Psi$
given by \eqref{eq:Psi1} is in the sense of distribution theory
\begin{equation}
\hat\Psi(y)=-\int_\R\hat H_\tau(y)\psi(\tau)
\diff \tau=-\frac{1}{\imag\pi|y|^2}\int_\R\psi(\tau)\diff\tau
+\frac{1}{\imag\pi|y|^2}\int_\R
\e^{-\imag\pi |y|^2/\tau}\psi(\tau)\diff\tau,
\qquad y\in\R^4\setminus\{0\}.
\label{eq:Psi2}
\end{equation}
This formula extends \eqref{eq:IBP3}.

\subsection{Fourier uniqueness meets Heisenberg uniqueness and the
Klein-Gordon equation}
In \cite{HM1}, in the context of the Klein-Gordon equation in $1+1$
dimensions, Hedenmalm and Montes found discrete uniqueness sets along
characteristic directions, based on ideas from dynamical systems and
ergodic theory. We apply the approach in \cite{HM1}, \cite{HM2}, \cite{HM3},
and \cite{CHM} 
to obtain a uniqueness result for the pair $\psi,\Psi$ connected by
\eqref{eq:Psi1}.  
Let $H^1_+(\R)$ denote the Hardy space of the upper half-plane. It
may be defined as the subspace of functions in $L^1(\R)$ with Poisson harmonic
extension to $\Hyp$  which is holomorphic.

\begin{thm}
Let $\psi\in L^1(\R)$ and $\Psi$ be as above. If   
$\Psi(x)=\hat\Psi(y)=0$ holds for all $x,y\in\Z^4\setminus\{0\}$,
and if $\Psi(x)=\ordo(|x|^{-2})$ as $|x|\to0$, then $\psi\in H^1_+(\R)$
and, as a consequence, $\Psi(x)\equiv0$ on $\R^4\setminus\{0\}$.
\label{thm:main}
\end{thm}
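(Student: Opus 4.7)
The plan is to translate the three hypotheses into conditions on the one-variable function $\psi$ alone, and then to invoke the Heisenberg uniqueness theorem for the hyperbola and the lattice-cross of critical density from \cite{HM1,HM2,HM3,CHM} to conclude that $\psi\in H^1_+(\R)$; the claim $\Psi\equiv 0$ will then be immediate.

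As a first step, I would exploit Lagrange's four-square theorem: since every $n\in\Z_+$ arises as $|x|^2$ for some $x\in\Z^4\setminus\{0\}$, the hypothesis $\Psi(x)=0$ on $\Z^4\setminus\{0\}$ translates via \eqref{eq:Psi1} into
\[
\int_\R \e^{\imag\pi n\tau}\psi(\tau)\,\diff\tau=0,\qquad n\in\Z_+.
\]
The decay assumption $\Psi(x)=\ordo(|x|^{-2})$ as $|x|\to 0$, applied to the identity $-\imag\pi|x|^2\Psi(x)=\int\e^{\imag\pi\tau|x|^2}\psi(\tau)\,\diff\tau$ together with dominated convergence, forces $\int_\R\psi(\tau)\,\diff\tau=0$, which is the $n=0$ version of the same vanishing. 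Feeding this identity back into \eqref{eq:Psi2} and using $|y|^2=n\in\Z_+$, the hypothesis $\hat\Psi(y)=0$ on $\Z^4\setminus\{0\}$ collapses to
\[
\int_\R \e^{-\imag\pi n/\tau}\psi(\tau)\,\diff\tau=0,\qquad n\in\Z_+,
\]
and the same identity $\int\psi=0$ gives the $n=0$ case again.

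In the second step, I would read these two families of conditions as saying that the Fourier transform of the measure $\diff\mu:=\psi(\tau)\,\diff\tau$, regarded as supported on the hyperbola $\{(\tau,1/\tau):\tau\neq 0\}\subset\R^2$, vanishes on the one-sided portion $\{(-n/2,0):n=0,1,2,\ldots\}\cup\{(0,n/2):n=0,1,2,\ldots\}$ of the lattice-cross $\tfrac12\Z\times\{0\}\cup\{0\}\times\tfrac12\Z$. The M\"obius involution $\tau\mapsto-1/\tau$ preserves the hyperbola and exchanges the two families (via the transformed density $\tilde\psi(\sigma):=\psi(-1/\sigma)/\sigma^2\in L^1(\R)$), while the shift $\tau\mapsto\tau+2$ preserves the first family; together these two maps generate exactly the Fuchsian group attached to the critical density $\alpha\beta=4$ in \cite{HM1}. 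At this critical density the kernel of the Heisenberg-uniqueness restriction map is nontrivial and is known, via the ergodic analysis of the associated Gauss-type interval map carried out in \cite{HM1,HM2,HM3,CHM}, to coincide with the Hardy space $H^1_+(\R)$. Invoking this identification yields $\psi\in H^1_+(\R)$.

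Finally, $\psi\in H^1_+(\R)$ extends holomorphically to $\Hyp$, and a standard contour-shift argument gives $\hat\psi(\xi)=0$ for every $\xi<0$. Rewriting \eqref{eq:Psi1} as
\[
\Psi(x)=-\frac{1}{\imag\pi|x|^2}\,\hat\psi\bigg(-\frac{|x|^2}{2}\bigg),\qquad x\in\R^4\setminus\{0\},
\]
and observing that $-|x|^2/2<0$ for every $x\neq 0$, we conclude $\Psi\equiv 0$ on $\R^4\setminus\{0\}$. The central obstacle in this plan is the invocation in the second step: one has only the \emph{one-sided} half of the lattice cross at one's disposal, and must verify that at critical density the kernel is precisely $H^1_+$ (rather than, say, $H^1_++\overline{H^1_+}$ that would correspond to the two-sided symmetric conditions) in this one-sided form; this is exactly the point where the ergodic machinery of \cite{HM1,HM2,HM3,CHM} does the real work.
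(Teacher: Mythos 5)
Your proposal follows essentially the same route as the paper: reduce the lattice hypotheses via Lagrange's four-square theorem and the small-$|x|$ decay condition to the statement that $\psi$ annihilates $1$, $\e^{\imag\pi m\tau}$, and $\e^{-\imag\pi n/\tau}$ for $m,n\in\Z_+$, invoke the one-sided Heisenberg-uniqueness result for the hyperbola (the paper cites Theorem 1.8.2 of \cite{HM2}) to conclude $\psi\in H^1_+(\R)$, and finish with the Paley--Wiener characterization of $H^1_+$. Your identification of the one-sided lattice-cross step as the real content is precisely where the paper defers to the cited machinery, so the two arguments coincide in substance.
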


\begin{proof}
In view of the assumption that $\Psi(x)=\ordo(|x|^{-2})$ as $|x|\to0$,
it follows from \eqref{eq:Psi1} that $\psi\in L^1(\R)$ annihilates the
constant function $1$.
%$\int_\R\psi(\tau)\diff\tau=0$ holds.
%$C_0(\R)$ and $\phi'\in L^1(\R)$,
%it follows that \eqref{eq:IBP4} holds, so that $\phi'\in L^1(\R)$ annihilates
%as a functional the constant function $1$.
Moreover, by the Lagrange (or Jacobi)
four squares theorem, each positive integer may be written as $|x|^2$ for
some $x\in\Z^4\setminus\{0\}$. Consequently, we see from \eqref{eq:IBP1} and
\eqref{eq:IBP3} that $\psi$ also annihilates the subspace of
$L^\infty(\R)$ spanned by the functions $\e^{\imag\pi m\tau}$ and
$\e^{-\imag \pi n/\tau}$, where $m,n\in\Z_+$ and $\tau$ is the real
variable. By Theorem 1.8.2 in \cite{HM2}, which relies on methods
developed in \cite{HM3} and is motivated by \cite{HM1},
we may conclude that $\psi\in H^1_+(\R)$. Finally,
in view of the standard Fourier analysis characterization of $H^1_+(\R)$, it
follows from this and \eqref{eq:IBP1} that $\Psi=0$ on $\R^4\setminus \{0\}$.
\end{proof}

We return to the initial setup with $\phi$ and $\Phi$. We think of $\phi'=\psi$
and $\Phi=\Psi$. Let $C_0(\R)$ denote
the space of continuous functions on $\R$ with limit value $0$ at infinity.
Then the condition at the origin in Theorem \ref{thm:main} may be replaced
by $\phi\in C_0(\R)$. 

\begin{cor}
Let $\Phi$ be given by \eqref{eq:IBP1},  where
$\phi\in C_0(\R)$ with $\phi'\in L^1(\R)$ and $d=4$. If 
$\Phi(x)=\hat\Phi(y)=0$ for all $x,y\in\Z^4\setminus\{0\}$,
then $\phi'\in H^1_+(\R)$
and, as a consequence, $\Phi(x)\equiv0$ on $\R^4\setminus\{0\}$. 
\end{cor}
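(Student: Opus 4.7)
The plan is to reduce the corollary directly to Theorem \ref{thm:main} by the identification $\psi:=\phi'$. By hypothesis, $\psi\in L^1(\R)$. A side-by-side comparison of the defining formulas \eqref{eq:IBP1} and \eqref{eq:Psi1} shows that, after noting that $-1/\imag=\imag$, the two prescriptions agree: $\Phi(x)=\Psi(x)$ for every $x\in\R^4\setminus\{0\}$. Since both functions are dominated by $C/|x|^2$, which is locally integrable on $\R^4$, they coincide as tempered distributions on all of $\R^4$, and therefore $\hat\Phi=\hat\Psi$ in the distributional sense. Hence the hypotheses $\Phi(x)=0$ and $\hat\Phi(y)=0$ on $\Z^4\setminus\{0\}$ transfer verbatim to $\Psi$ and $\hat\Psi$.

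The remaining hypothesis of Theorem \ref{thm:main} to verify is the decay condition $\Psi(x)=\ordo(|x|^{-2})$ as $|x|\to 0$. First I would multiply \eqref{eq:IBP1} by $|x|^2$ to get
\[
|x|^2\Phi(x)=\frac{\imag}{\pi}\int_\R\e^{\imag\pi\tau|x|^2}\phi'(\tau)\diff\tau.
\]
Since $|\phi'|\in L^1(\R)$, dominated convergence gives
\[
\lim_{|x|\to 0}|x|^2\Phi(x)=\frac{\imag}{\pi}\int_\R\phi'(\tau)\diff\tau.
\]
The assumption $\phi'\in L^1(\R)$ (in the distributional sense) means that $\phi$ is absolutely continuous on $\R$; combined with $\phi\in C_0(\R)$, the fundamental theorem of calculus yields $\int_\R\phi'(\tau)\diff\tau=\lim_{b\to+\infty}\phi(b)-\lim_{a\to-\infty}\phi(a)=0$. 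Thus $|x|^2\Phi(x)\to 0$ as $|x|\to 0$, which is exactly the required decay.

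With all hypotheses of Theorem \ref{thm:main} in place for the pair $(\psi,\Psi)=(\phi',\Phi)$, we conclude that $\phi'\in H^1_+(\R)$ and $\Phi\equiv0$ on $\R^4\setminus\{0\}$.

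The proof is almost entirely bookkeeping: the only nonroutine point is the vanishing of $\int_\R\phi'$, which is precisely where the boundary condition $\phi\in C_0(\R)$ (replacing the pointwise smallness of $\Psi$ at the origin in the theorem) is used. No separate work is needed beyond invoking the theorem.
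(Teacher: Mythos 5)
Your proposal is correct and follows exactly the route the paper intends: identify $\psi=\phi'$, $\Psi=\Phi$, and observe that $\phi\in C_0(\R)$ with $\phi'\in L^1(\R)$ forces $\int_\R\phi'=0$ (the paper's \eqref{eq:IBP4}), which is precisely the $\ordo(|x|^{-2})$ condition at the origin required by Theorem \ref{thm:main}. The verification via dominated convergence and the fundamental theorem of calculus for absolutely continuous functions is exactly the bookkeeping the paper leaves implicit.
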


\begin{rem}
%(a)
%The method used in \cite{HM1}, \cite{HM2}, and \cite{HM3}, as well as in
%\cite{CHM}
%is based on ideas from dynamical systems and ergodic theory, We use the
%strongest result that is available to us. However, we could have used
%the results in
%e.g. \cite{HM1} to derive the same assertion under stronger assumptions
%on the
%function $\phi$. 
%  \noindent{(b)}
The above theorem is a four-dimensional analogue of the uniqueness
part of the Fourier interpolation formula found by Radchenko and Viazovska
\cite{RV19}. That work in its turn was motivated by Fourier interpolation
formul\ae{} associated with optimizing the Cohn-Elkies method for sphere packing
\cite{Viaz17}, \cite{CKMRV}.
\end{rem}

\end{document}